\newtheorem{theorem}{Theorem}           
\newtheorem{lemma}{Lemma}               
\newtheorem{corollary}{Corollary}
\newcommand{\mc}{\mathcal{C}}
\newcommand{\mr}{\mathbb{R}}
\theoremstyle{definition}
\newtheorem{remark}{Remark}
\begin{document}

\title[Short title]{On the Solvability of Nonlinear Differential Equations Subject to Generalized Boundary Conditions}

\author{Benjamin Freedman and Jes\'{u}s Rodr\'{i}guez}

\address{Benjamin Freedman \\ 
Department of Mathematics \\
Box 8205, NCSU, Raleigh, NC 27695-8205\\
USA \\
\email{bnfreedm@ncsu.edu}}

\address{Jes\'{u}s Rodr\'{i}guez \\
Department of Mathematics \\
Box 8205, NCSU, Raleigh, NC 27695-8205 \\
USA \\
\email{rodrigu@ncsu.edu}}

\CorrespondingAuthor{Jes\'{u}s Rodr\'{i}guez}

\date{28.02.2018}                               

\keywords{boundary value problems, ordinary differential equation, nonlinear equations, fixed-point theorems}

\subjclass{34A34, 34B15, 47H09, 47H10, 47J07}

\begin{abstract}
        In this paper, we analyze nonlinear differential equations subject to generalized boundary conditions. More specifically, we provide a framework from which we can provide conditions, which are straightforward to check, for the solvability of a large number of nonlinear scalar boundary value problems. We begin by giving our general strategy which involves the reformulation of our boundary value problem as an operator equation. We then proceed to establish our results and compare them to closely related previous work. 
\end{abstract}

\maketitle

\section{Introduction}
This paper is devoted to the study of nonlinear differential equations subject to generalized nonlinear boundary conditions or constraints. The class of problems we consider include, as a special case, differential equations subject to multi-point boundary conditions. A framework is provided which enables us to establish easily verifiable conditions which guarantee the existence of solutions to a significant class of problems. Two relevant papers devoted to the study of nonlinear differential equations subject to constraints are \cite{ja2} and \cite{jsuar1}. The work that we will now present allows us to establish the existence of solutions for problems that do lie within the scope of the results in either one of these two papers. 

The literature concerning the study of boundary value problems is extensive. In \cite{stiel}, \cite{rodpad} and \cite{urabe} the authors analyze boundary value problems subject to linear constraints. In \cite{AN}, the reader will find results pertaining to three-point boundary value problems. The use of projection schemes appears in \cite{mar}, \cite{rod2}, \cite{stiel}, \cite{rodpad} and \cite{urabe}.

In \cite{KB}, \cite{BL}, \cite{ja}, and \cite{ja2} the authors obtain existence results based on a global inverse function theorem. Readers interested in fractional differential equations may consult \cite{AHM} and those who would like to see results involving discrete-time systems are referred to \cite{ma}, \cite{rod3}, \cite{ja} and \cite{jsuar1}.

\section{Generalities}
\indent In this paper, we study nonlinear scalar boundary value problems which we approach by reformulating as an operator equation of the form
\begin{align}
\mathcal{L}x=H(x) \label{eqmain}
\end{align}
where $\mathcal{L}$ is a linear operator, $H$ is a nonlinear operator and both are defined on a Banach space. Suppose that $\mathcal{L}$ has an inverse, and $H=\Psi+G$. The strategy we will employ is to first give conditions under which $\mathcal{L}-\Psi$ is guaranteed an inverse. That is, we give conditions under which we can uniquely solve the equation
\begin{align}
\mathcal{L}x-\Psi(x)=y \label{eqsplit}
\end{align}
for any point $y$ in the space that $\mathcal{L}$ and $\Psi$ map into. Given a result of this type, we then study conditions under which $\eqref{eqmain}$ has a (possibly non-unique) solution by studying the operator $(\mathcal{L}-\Psi)^{-1} G$ and determining conditions under which it has a fixed point. This will rely on a Schauder's fixed point theorem argument. \\

\section{Differential Equations}

        We consider nonlinear differential equations on the interval $[0,1]$ of the form
\begin{align}
a_n(t)x^{(n)}(t)+a_{n-1}(t)x^{(n-1)}(t)+\dots+a_0(t) x(t)+\psi(x(t))=G(x)(t) \label{de}
\end{align}
subject to the boundary conditions
\begin{align}
\sum_{j=1}^n\int_0^1 x^{(j-1)}(t)d\omega_{ij}(t)+\eta_i(x) &=\phi_i(x)  \label{bc}
\end{align}
\\
for $1 \leq i \leq n$. \\ \\
\indent Here $\psi: \mathbb{R} \to \mathbb{R}$, the maps $\eta_i$ and $\phi_i$ for $1 \leq i \leq n$ are nonlinear real-valued maps from $\mc=(C[0,1],\mathbb{R},\| \cdot \|_{\infty})$ into $\mathbb{R}$ where $\| \cdot \|_{\infty}$ denotes the supremum norm. Further $G: \mc \to \mc$ is a continuous map, $a_0, a_1, \dots, a_n \in \mc$ and $a_n(t) \neq 0$ for all $t \in [0,1]$. We use $\mc^n$ to denote the subspace of $\mc$ consisting of all $n$-times continuously differentiable functions on $[0,1]$. For $1 \leq i \leq n$, $1 \leq j \leq n$, $\omega_{ij}: [0,1] \to \mathbb{R}$ is a real-valued function of bounded variation. We will determine conditions under which we can guarantee at least one solution in $\mc^n$ to $\eqref{de}-\eqref{bc}$. \\ \\
\indent To do so, we first consider a closely related problem. That is, we seek conditions under which we can uniquely solve 
\newpage
\begin{align}
a_n(t)x^{(n)}(t)+a_{n-1}(t)x^{(n-1)}(t)+\dots+a_0(t) x(t)+\psi(x(t))=h(t) \nonumber
\end{align}
subject to
\begin{align}
\sum_{j=1}^n\int_0^1 x^{(j-1)}(t)d\omega_{ij}(t) +\eta_i(x)&=v_i \nonumber
\end{align}
for $1 \leq i \leq n$ and for any $h \in \mc$ and $v \in \mathbb{R}^n$. \\

Define $L: \mc^n \to \mc$ by 
\begin{align}
[Lx](t)=a_n(t)x^{(n)}(t)+a_{n-1}(t)x^{(n-1)}(t)+\dots+a_0(t) x(t) \nonumber
\end{align}
and $B: \mc^n \to \mathbb{R}^n$ by

\begin{align}
B(x)=\begin{bmatrix}
\sum_{j=1}^n\int_0^1 x^{(j-1)}(t)d\omega_{1j}(t) \\
\sum_{j=1}^n\int_0^1 x^{(j-1)}(t)d\omega_{2j}(t) \\
\cdots \\
\sum_{j=1}^n\int_0^1 x^{(j-1)}(t)d\omega_{nj}(t)
\end{bmatrix}.
\nonumber
\end{align}

The map $\mathcal{L}:\mc^n \to \mc \times \mathbb{R}^n$ will be defined as
\begin{align}
\mathcal{L}=\Bigg(
\begin{array}{c}
L \\
B 
\end{array}
\Bigg). \nonumber
\end{align} \\

Before proceeding, we state the following remark which illustrates an important special case of $\eqref{de}-\eqref{bc}$ that can be dealt with using the framework of this section.

\begin{remark}
Let $t_0 \in [0,1]$, $\beta \in \mathbb{R}$ and let the function $\omega: [0,1] \to \mathbb{R}$ be the step function
\[
\omega(x) =\begin{cases}
\displaystyle 0 , & t < t_0 ,\\[3 mm ]
\beta, & t \geq t_0,
\end{cases}
\]

So for any $x \in \mc$, the Riemann-Stieltjes of $x$ with respect to $\omega$ is given by
\begin{align}
\int_0^1 x(t)d\omega(t) =\beta x(t_0). \nonumber
\end{align}
\newpage
Therefore, the boundary value problem $\eqref{de}-\eqref{bc}$ includes problems of the form
\begin{align}
a_n(t)x^{(n)}(t)+a_{n-1}(t)x^{(n-1)}(t)+\dots+a_0(t) x(t)+\psi(x(t))=G(x)(t) \nonumber
\end{align}
subject to the multipoint boundary conditions

\begin{align}
\sum_{i=1}^q B_i \bar{x}(t_i)+\begin{bmatrix}
           \eta_1(x) \\
           \eta_2(x) \\
           \dots \\
           \eta_n(x)
         \end{bmatrix}=
         \begin{bmatrix}
           \phi_1(x) \\
           \phi_2(x) \\
           \dots \\
           \phi_n(x)
         \end{bmatrix} \nonumber
\end{align} 

where
\begin{align}
\bar{x}(t)=\begin{bmatrix}
x(t) \\
x'(t) \\
\cdots \\
x^{(n-1)}(t) 
\end{bmatrix} \nonumber
\end{align}

$t_i \in [0,1]$, and $B_i$ is a real-valued $n \times n$ matrix for all $1 \leq i \leq q$.
\end{remark}

It is well known from the theory of linear differential equations that $\ker(L)$ is $n$-dimensional. Without loss of generality, choose a basis $\{u_1,u_2, \dots, u_n\}$ for the kernel of $L$ such that $\|u_1\|+\|u_2\|+ \dots +\|u_n\| \leq 1$ and let

\begin{align}
u=\begin{bmatrix}
u_1 \\
u_2 \\
\cdots \\
u_n
\end{bmatrix} \nonumber
\end{align}

Suppose that the functions of bounded variation $\omega_{ij}:[0,1] \to \mathbb{R}$ for $i, j \in \{1,2, \dots, n\}$ appearing in the boundary conditions are such that the $n \times n$ real-valued matrix $\mathcal{B}=[Bu_1|Bu_2| \dots| Bu_n]$ is invertible. \\

\indent Defining the constant $B_0$ as: 
\begin{align}
B_0&=\|\mathcal{B}^{-1}\| \nonumber
\end{align}
then we have that for any $v \in \mr^n$
\begin{align}
\|u^T \mathcal{B}^{-1} v\| &\leq B_0 |v|. \nonumber
\end{align}
\\
\indent As a matter of notation, define $\eta: \mc \to \mathbb{R}^n$ by $\eta(x)=\begin{bmatrix}
\eta_1(x) \nonumber \\
\eta_2(x) \nonumber \\
\dots \nonumber \\
\eta_n(x) \nonumber 
\end{bmatrix}$. \\

\begin{theorem} \label{Theo1} Suppose the map $\psi: \mathbb{R} \to \mathbb{R}$ is Lipschitz with constant $K_1$ and $\eta: \mc \to \mathbb{R}^n$ is Lipschitz with constant $K_2$. If 
\begin{align}
A_0K_1+B_0K_2<1 \nonumber
\end{align}
then for each pair $h \in \mc$, $v \in \mathbb{R}^n$, the boundary value problem
\begin{align}
a_n(t)x^{(n)}(t)+a_{n-1}(t)x^{(n-1)}(t)+\dots+a_0(t) x(t)+\psi(x(t))=h(t) && \nonumber
\end{align}
subject to
\begin{align}
\sum_{j=1}^n\int_0^1 x^{(j-1)}(t)d\omega_{ij}(t)+\eta_i(x)=v_i,  &&1 \leq i \leq n \nonumber 
\end{align}
has a unique solution.
\end{theorem}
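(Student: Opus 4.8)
\emph{Proof strategy.} The plan is to recast the boundary value problem of the theorem (the data on the right being a fixed $h\in\mc$, $v\in\mathbb{R}^n$) as a single operator equation on $\mc^n$ and then realize its solution operator as the fixed point of a contraction on the Banach space $\mc$. Keeping the notation $\mathcal{L}=(L,B):\mc^n\to\mc\times\mathbb{R}^n$ introduced above, define $\Psi:\mc\to\mc\times\mathbb{R}^n$ by $\Psi(x)=\big(\psi(x(\cdot)),\eta(x)\big)$; this makes sense since $\psi$ is continuous and $\eta$ maps into $\mathbb{R}^n$. Then $x\in\mc^n$ solves the problem associated with $(h,v)$ if and only if $\mathcal{L}x+\Psi(x)=(h,v)$, and---once $\mathcal{L}$ is seen to be invertible---if and only if $x=\mathcal{L}^{-1}\big((h,v)-\Psi(x)\big)$.

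First I would show $\mathcal{L}$ is a bijection of $\mc^n$ onto $\mc\times\mathbb{R}^n$ with bounded inverse. The classical theory of linear ordinary differential equations gives that $L$ is onto $\mc$ with kernel $\operatorname{span}\{u_1,\dots,u_n\}$, and furnishes a bounded particular-solution operator $\mathcal{G}:\mc\to\mc^n$ with $L\mathcal{G}h=h$; thus every solution of $Lx=h$ has the form $x=\mathcal{G}h+u^Tc$ with $c\in\mathbb{R}^n$. Imposing $Bx=v$ is equivalent to $\mathcal{B}c=v-B\mathcal{G}h$, which, since $\mathcal{B}$ is invertible, has the unique solution $c=\mathcal{B}^{-1}(v-B\mathcal{G}h)$. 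Hence
\[
\mathcal{L}^{-1}(h,v)=\mathcal{G}h+u^T\mathcal{B}^{-1}\big(v-B\mathcal{G}h\big),
\]
a bounded linear map; the operator norm of $h\mapsto\mathcal{L}^{-1}(h,0)$ from $(\mc,\|\cdot\|_{\infty})$ into itself is the constant $A_0$ of the statement (equivalently $A_0=\sup_{0\le t\le 1}\int_0^1|g(t,s)|\,ds$ for the Green's function $g$ of $\{L,B\}$). Boundedness could also be deduced from the open mapping theorem, but the explicit formula is what lets us track the constants.

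Next, define $T:\mc\to\mc$ by $T(x)=\mathcal{L}^{-1}\big((h,v)-\Psi(x)\big)$, composed with the inclusion $\mc^n\hookrightarrow\mc$; it is well defined by the remarks above, and any fixed point of $T$ lies in the range of $\mathcal{L}^{-1}$, hence in $\mc^n$, and is exactly a solution of the boundary value problem (and conversely). Using linearity of $\mathcal{L}^{-1}$, for $x,\bar x\in\mc$ one has $T(\bar x)-T(x)=\mathcal{L}^{-1}\big(\psi(x(\cdot))-\psi(\bar x(\cdot)),\,\eta(x)-\eta(\bar x)\big)$, so that
\[
\|T(\bar x)-T(x)\|_{\infty}\le A_0\big\|\psi(x(\cdot))-\psi(\bar x(\cdot))\big\|_{\infty}+\big\|u^T\mathcal{B}^{-1}\big(\eta(x)-\eta(\bar x)\big)\big\|.
\]
The Lipschitz bound on $\psi$ gives $\big\|\psi(x(\cdot))-\psi(\bar x(\cdot))\big\|_{\infty}\le K_1\|x-\bar x\|_{\infty}$, while the Lipschitz bound on $\eta$ together with the inequality $\|u^T\mathcal{B}^{-1}w\|\le B_0|w|$ recorded above bounds the second term by $B_0K_2\|x-\bar x\|_{\infty}$. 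Hence $T$ is Lipschitz on $\mc$ with constant $A_0K_1+B_0K_2<1$.

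Finally, the Banach contraction mapping theorem on the complete space $\mc$ yields a unique $x^{\ast}\in\mc$ with $T(x^{\ast})=x^{\ast}$; since $x^{\ast}=\mathcal{L}^{-1}\big((h,v)-\Psi(x^{\ast})\big)\in\mc^n$, this is the unique solution in $\mc^n$. I expect the only nonroutine point to be the second step---verifying that $\mathcal{L}$ has a bounded inverse and that the pertinent norm is exactly $A_0$---but this amounts to organizing the standard existence/uniqueness theory for linear ODEs around the assumed invertibility of $\mathcal{B}$; granting it, the contraction estimate and the appeal to Banach's theorem are immediate.
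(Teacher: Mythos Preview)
Your proposal is correct and follows essentially the same route as the paper: establish that $\mathcal{L}$ is a bijection with bounded inverse (via the invertibility of $\mathcal{B}$ and a particular-solution operator), rewrite the problem as a fixed-point equation $x=\mathcal{L}^{-1}\big((h,v)-\Psi(x)\big)$, show this map is a contraction on $\mc$ with constant $A_0K_1+B_0K_2$, and invoke Banach's theorem. The paper organizes $\mathcal{L}^{-1}$ as $(L|_{\ker(B)})^{-1}h+u^T\mathcal{B}^{-1}v$ rather than your $\mathcal{G}h+u^T\mathcal{B}^{-1}(v-B\mathcal{G}h)$, but these are the same map, and your identification of $A_0$ with the norm of $h\mapsto\mathcal{L}^{-1}(h,0)$ matches the paper's definition of $A_0$ as a bound on $\|(L|_{\ker(B)})^{-1}\|$.
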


\begin{proof}
Suppose $\mathcal{L}(x_0)=0$ for some $0 \neq x_0 \in \mc^n$ and $\{u_1, \dots, u_n\}$ be the basis we chose for $\ker(L)$ above. Since $x_0 \in \ker(L)$ there exists a unique set of constants $c_1, \dots, c_n \in \mathbb{R}$ with $c_i \neq 0$ for some $1 \leq i \leq n$ such that $x_0=\sum_{i=1}^n c_iu_i$. Since $x_0 \in \ker(B)$ we have that 
 \begin{align}
0&=Bx_0 \nonumber \\
&=B\Bigg(\sum_{i=1}^n c_iu_i \Bigg) \nonumber \\
 &=\sum_{i=1}^n c_iBu_i \nonumber 
 \end{align}
contradicting the fact that $\{Bu_1, \dots ,Bu_n\}$ is a linearly independent set in $\mathbb{R}^n$. Therefore, $x_0=0$ and we conclude that $\mathcal{L}: \mc^n \to \mc \times \mathbb{R}^n$ is one-to-one. \\
 \indent Let $h \in \mc$ and $v \in \mathbb{R}^n$. By the general theory of linear scalar ODEs it is well known that the solution $Lx=h$ has at least one solution in $\mc^n$. Let $x_p \in \mc^n$ be the particular solution to this equation given by variation of parameters. That is, 
\begin{align}
x_p(t)=\sum_{k=1}^n u_k(t) \int_0^t \frac{h(s)W_k[u_1, \dots, u_n](s)}{a_n(s)W[u_1,\dots,u_n](s)} ds\nonumber
\end{align}
where $W_k$ denotes the determinant of the matrix obtained by replacing the $k^{th}$ column of the matrix whose determinant is $W$ with $e_n$ (the standard basis vector with a $1$ in the $n^{th}$ slot and $0s$ everywhere else).

 By our assumption that $\{Bu_1, \dots, Bu_n\}$ is a basis for $\mathbb{R}^n$, there exists a unique set of constants $d_1, \dots, d_n$ such that
 \begin{align}
\sum_{i=1}^n d_iBu_i =v-B(x_p). \nonumber
 \end{align}
 Then we have that
 \begin{align}
 L\Bigg( x_p+\sum_{i=1}^n d_iu_i\Bigg)=h+0=h \nonumber
 \end{align}
 and further that 
 \begin{align}
 B\Bigg( x_P+\sum_{i=1}^n d_iu_i\Bigg)&=B(x_p)+ B\Bigg(\sum_{i=1}^n d_iu_i\Bigg) \nonumber \\
 &=B(x_p)+\sum_{i=1}^n d_iBu_i \nonumber \\
 &=B(x_p)+(v-B(x_p)). \nonumber
 \end{align} 
 Therefore, $\mathcal{L}: \mc^n \to \mc \times \mathbb{R}^n$ is a bijection with 
 \begin{align}
 \mathcal{L}^{-1}(h,v)^T =\big( L|_{\ker(B)} \big)^{-1} h+u^T \mathcal{B}^{-1} v \nonumber 
 \end{align}
 where $\big( L|_{\ker(B)} \big)$ is the map $L$ restricted to the kernel of $B$.
 Consequently, we have that $\|\mathcal{L}^{-1}\| \leq \max\{A_0,B_0\}$ where $A_0$ is an upper bound on the norm of the continuous linear map $\big( L|_{\ker(B)} \big)^{-1}$. \\ \\
\indent We now define $\Psi: \mc \to \mc \times \mathbb{R}^n$ by $\Psi(x)=
\begin{bmatrix}
-\psi(x) \nonumber \\
-\eta(x) \nonumber 
\end{bmatrix}$.
Note that since the map from $\mathbb{R}$ to $\mathbb{R}$ given by $t \mapsto \psi(t)$ is Lipschitz with constant $K_1$, then the map from $\mc$ to $\mc$ defined by $x \mapsto \psi \circ x$ is Lipschitz with constant $K_1$. For each pair $(h,v) \in \mc \times \mathbb{R}^n$, we define the map $H_{(h,v)}: \mc \to \mc$ by
\begin{align}
H_{(h,v)}(x)&=\mathcal{L}^{-1} \Psi(x)+\mathcal{L}^{-1}[(h,v)^T]. \nonumber 
\end{align}
Let $x_1, x_2 \in \mc$. Then we have that 
\begin{align}
\|H_{(h,v)}(x_1)-H_{(h,v)}(x_2)\|&=\|\mathcal{L}^{-1} \Psi(x_1)-\mathcal{L}^{-1} \Psi(x_2)\| \nonumber \\
&\leq A_0 \| \psi(x_1)-\psi(x_2)\|+B_0 \|\eta(x_1)-\eta(x_2)\| \nonumber \\
&\leq (A_0K_1+B_0K_2)\|x_1-x_2\|. \nonumber
\end{align}
Then $H_{(h,v)}$ is a contraction on $\mc$ and so by Banach's fixed point theorem it has a unique fixed point $x_0 \in \mc$. Since $\mathcal{L}^{-1}$ maps into $\mc^n$, we have that $x_0 \in \mc^n$. Therefore, there exists a unique $x_0 \in \mc^n$ satisfying $\mathcal{L}(x)-\Psi(x)=(h,v)^T$. Since $h \in \mc$ and $v \in \mathbb{R}^n$ were arbitrary, we conclude that the operator $(\mathcal{L}-\Psi):\mc^n \to \mc \times \mathbb{R}^n$ is invertible.  From this it follows that for each pair $(h,v) \in \mc \times \mathbb{R}^n$, the boundary value problem
\newpage

\begin{align}
a_n(t)x^{(n)}(t)+a_{n-1}(t)x^{(n-1)}(t)+\dots+a_0(t) x(t)+\psi(x(t))=h(t) && \nonumber
\end{align}
subject to
\begin{align}
\sum_{j=1}^n\int_0^1 x^{(j-1)}(t)d\omega_{ij}(t)+\eta_i(x)=v_i, && 1 \leq i \leq n \nonumber 
\end{align}
has exactly one solution.
 \qed \medskip
\end{proof}

The following lemma establishes an important result regarding the map $\mathcal{L}^{-1}: \mc \times \mathbb{R}^n \to \mc$. The importance of this lemma will become apparent when we provide our conditions for the solvability of $\eqref{de}-\eqref{bc}$.

\begin{lemma}
The operator $\mathcal{L}^{-1}:\mc \times \mathbb{R}^n \to \mc$ is compact.
\end{lemma}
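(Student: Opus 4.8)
The plan is to exploit the explicit representation of $\mathcal{L}^{-1}$ obtained in the proof of Theorem~\ref{Theo1}, namely
\begin{align}
\mathcal{L}^{-1}(h,v)^T =\big( L|_{\ker(B)} \big)^{-1} h+u^T \mathcal{B}^{-1} v, \nonumber
\end{align}
together with the identity $\big( L|_{\ker(B)} \big)^{-1} h = x_p - u^T\mathcal{B}^{-1}B(x_p)$, where $x_p$ is the variation-of-parameters particular solution of $Lx=h$ written down in that proof. Since $\mathcal{L}^{-1}$ is already known to be a bounded linear operator (with $\|\mathcal{L}^{-1}\|\le\max\{A_0,B_0\}$), it suffices to show that it carries the closed unit ball of $\mc\times\mathbb{R}^n$ to a relatively compact subset of $\mc$. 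I would do this via the Arzel\`a--Ascoli theorem after splitting $\mathcal{L}^{-1}$ into a ``variation of parameters'' part $h\mapsto x_p$ and a finite-rank part $(h,v)\mapsto u^T\mathcal{B}^{-1}\big(v-B(x_p)\big)$, the latter taking values in the $n$-dimensional subspace $\mathrm{span}\{u_1,\dots,u_n\}$ of $\mc$.

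First I would record the basic estimate on $x_p$. Writing $g_k(s)=W_k[u_1,\dots,u_n](s)/\big(a_n(s)W[u_1,\dots,u_n](s)\big)$, this is a fixed continuous, hence bounded, function on $[0,1]$, because $a_n$ and the Wronskian $W[u_1,\dots,u_n]$ are nowhere zero on the compact interval $[0,1]$. From the variation-of-parameters formula and the usual cancellations in its construction one has $x_p^{(j-1)}(t)=\sum_k u_k^{(j-1)}(t)\int_0^t g_k(s)h(s)\,ds$ for $1\le j\le n$, so $\|x_p^{(j-1)}\|_\infty\le C_j\|h\|_\infty$ for constants $C_j$ depending only on the fixed data $u_k$ and $g_k$. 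Consequently $|B(x_p)|$ is bounded on the unit ball, since each component $\sum_j\int_0^1 x_p^{(j-1)}\,d\omega_{ij}$ is dominated by $\sum_j\|x_p^{(j-1)}\|_\infty\,\mathrm{Var}(\omega_{ij})$; and trivially $|\mathcal{B}^{-1}v|\le B_0|v|$. Hence the image of the unit ball under the finite-rank part lies in a bounded subset of the finite-dimensional space $\mathrm{span}\{u_1,\dots,u_n\}$, which is relatively compact.

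Next I would treat the variation-of-parameters part by showing that $\{x_p:\|h\|_\infty\le1\}$ is bounded and equicontinuous in $\mc$. Boundedness is the case $j=1$ of the estimate above. For equicontinuity, $x_p'(t)=\sum_k u_k'(t)\int_0^t g_k(s)h(s)\,ds$ is bounded by $\big(\sum_k\|u_k'\|_\infty\|g_k\|_\infty\big)\|h\|_\infty$, so the family is uniformly Lipschitz, hence equicontinuous. Arzel\`a--Ascoli then gives relative compactness of $\{x_p:\|h\|_\infty\le1\}$ in $\mc$. Since the sum of two relatively compact subsets of a normed space is relatively compact (it is contained in the continuous image of the product of their closures), $\mathcal{L}^{-1}$ maps the unit ball to a relatively compact set, and therefore $\mathcal{L}^{-1}:\mc\times\mathbb{R}^n\to\mc$ is compact.

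The main obstacle is the uniform-in-$h$ control of $x_p$ and $x_p'$ by $\|h\|_\infty$; the point that makes it go through is that $a_n$ and $W[u_1,\dots,u_n]$ do not vanish on $[0,1]$, so the coefficient functions $g_k$ in the variation-of-parameters formula are genuinely bounded, and none of the estimates ever sees the (possibly large) higher derivatives of $x_p$ beyond what that formula makes explicit. Everything after that is routine Arzel\`a--Ascoli bookkeeping. (Alternatively, one could observe that $\mathcal{L}^{-1}$ in fact maps boundedly into $\mc^n$ equipped with its $C^n$-norm and that the inclusion $\mc^n\hookrightarrow\mc$ is compact; but the direct argument above stays within the notation and constants already introduced.)
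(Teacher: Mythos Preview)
Your argument is correct and follows the same overall Arzel\`a--Ascoli strategy as the paper: show that the image of a bounded set is uniformly bounded and equicontinuous in $\mc$. The difference is in how the equicontinuity is obtained. The paper works directly with $\mathcal{L}^{-1}(h,v)$ and simply asserts a Lipschitz-in-$t$ estimate $|[\mathcal{L}^{-1}(h,v)](t_1)-[\mathcal{L}^{-1}(h,v)](t_2)|\le \max\{A_0,B_0\}M\,|t_1-t_2|$ without spelling out where that bound on the $t$-derivative comes from. You instead split $\mathcal{L}^{-1}$ into the variation-of-parameters piece $h\mapsto x_p$ and a finite-rank piece landing in $\mathrm{span}\{u_1,\dots,u_n\}$; the latter is trivially compact, and for the former you read off a uniform bound on $x_p'$ directly from the explicit formula, using that $a_n$ and the Wronskian are bounded away from zero on $[0,1]$. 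This decomposition buys you an honest, verifiable Lipschitz constant for the equicontinuity step and makes transparent the role of the hypothesis $a_n(t)\neq 0$. Your parenthetical alternative (factor $\mathcal{L}^{-1}$ through the compact embedding $\mc^n\hookrightarrow\mc$) would also work and is arguably the cleanest route.
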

\begin{proof}
Let $M>0$ and define $S=\{(h,v) \in \mc \times \mathbb{R}^n: \|h\|+|v| \leq M\}$. Let $(h,v) \in S$. Then
\begin{align}
\|\mathcal{L}^{-1}(h,v)\| &\leq \big\| \mathcal{L}^{-1} \big\| ( \|h\|+|v| )\nonumber \\
&\leq \max\{A_0,B_0\} (\|h\|+|v|) \nonumber \\
&\leq \max\{A_0,B_0\} M. \nonumber
\end{align}
Therefore the set $\{\mathcal{L}^{-1}(S)\}$ is bounded. We now wish to show this set forms an equicontinuous set of functions. \\ \\
Let $\varepsilon>0$, and let $\delta=\frac{\varepsilon}{\max\{A_0,B_0\}M}$. Then for any $(h,v) \in \mc \times \mathbb{R}^n$ and $t_1, t_2 \in [0,1]$ with $|t_1-t_2|<\delta$,

\begin{multline*} 
\big| \big[ \mathcal{L}^{-1} (h,v) \big](t_1)- \big[ \mathcal{L}^{-1} (h,v) \big](t_2) \big| \\
= \Bigg| \Big[ \big(L |_{\ker(B)} \big)^{-1}h \Big](t_1)-\Big[ \big(L |_{\ker(B)} \big)^{-1}h \Big](t_2) +u^T(t_1) \mathcal{B}^{-1}v(t_1)-u^T(t_2) \mathcal{B}^{-1}v(t_2) \Bigg| \nonumber 
 \end{multline*} 
 \begin{align}
&\leq \Bigg| \Big[ \big(L |_{\ker(B)} \big)^{-1}h \Big] (t_1)-\Big[ \big(L |_{\ker(B)} \big)^{-1}h \Big](t_2) \Bigg|+\Big| u^T (t_1)\mathcal{B}^{-1}v(t_1)-u^T(t_2) \mathcal{B}^{-1}v(t_2) \Big| \nonumber \\
&\leq \max\{A_0,B_0\}M|t_1-t_2| \nonumber \\
&< \varepsilon. \nonumber
\end{align}

Therefore the set of functions $\{\mathcal{L}^{-1}(S)\}$ is equicontinous and we conclude that $\mathcal{L}^{-1}: \mc \times \mathbb{R}^n \to \mc$ is compact by the Arzel\'{a}--Ascoli theorem. \qed \medskip
\end{proof}

 \indent Recall that in the proof of theorem \ref{Theo1}, we established that the operator $\mathcal{L}-\Psi$ is a bijection from $\mc^n$ onto $\mc \times \mathbb{R}^n$. We now state important properties of $(\mathcal{L}-\Psi)^{-1}$ under the conditions of theorem \ref{Theo1}.  The proof of the first corollary follows directly from corollary 2.3.2 in \cite{dra}. \\

 \begin{corollary}
 Suppose the conditions of theorem \ref{Theo1} hold. Then the map $(\mathcal{L}-\Psi)^{-1}: \mc \times \mathbb{R}^n \to \mc^n$ is Lipschitz continuous with constant 
 \begin{align}
 K \equiv \frac{\max\{A_0,B_0\}}{1-(A_0K_1+B_0K_2)}. \nonumber \\
 \nonumber
 \end{align}
 \end{corollary}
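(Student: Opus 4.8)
The plan is to deduce the statement from the estimates already assembled in the proof of Theorem~\ref{Theo1}, passed through the quantitative perturbation result that is cited (corollary 2.3.2 in \cite{dra}). Recall from that proof that $\mathcal{L}:\mc^n \to \mc \times \mathbb{R}^n$ is a linear homeomorphism with $\|\mathcal{L}^{-1}\| \le \max\{A_0,B_0\}$, and that the composite $\mathcal{L}^{-1}\Psi:\mc \to \mc$ satisfies
\begin{align}
\|\mathcal{L}^{-1}\Psi(x_1)-\mathcal{L}^{-1}\Psi(x_2)\| \le (A_0K_1+B_0K_2)\|x_1-x_2\| \nonumber
\end{align}
for all $x_1,x_2 \in \mc$. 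By hypothesis $A_0K_1+B_0K_2<1$, so $\mathcal{L}^{-1}\Psi$ is a contraction on the Banach space $\mc$; this is exactly the configuration (a linear homeomorphism perturbed by a Lipschitz map whose composition with the inverse is a contraction) to which corollary 2.3.2 of \cite{dra} applies, and it yields directly that $(\mathcal{L}-\Psi)^{-1}$ exists and is Lipschitz with constant $K=\dfrac{\max\{A_0,B_0\}}{1-(A_0K_1+B_0K_2)}$.

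If one prefers a self-contained argument, I would proceed as follows. Fix $(h_1,v_1),(h_2,v_2)\in \mc\times\mathbb{R}^n$ and set $x_i=(\mathcal{L}-\Psi)^{-1}(h_i,v_i)\in\mc^n$, so that $\mathcal{L}x_i-\Psi(x_i)=(h_i,v_i)^T$. Subtracting and using linearity of $\mathcal{L}$ gives $\mathcal{L}(x_1-x_2)=(h_1-h_2,v_1-v_2)^T+\big(\Psi(x_1)-\Psi(x_2)\big)$; applying the linear operator $\mathcal{L}^{-1}$ yields
\begin{align}
x_1-x_2=\mathcal{L}^{-1}\big[(h_1-h_2,v_1-v_2)^T\big]+\mathcal{L}^{-1}\Psi(x_1)-\mathcal{L}^{-1}\Psi(x_2). \nonumber
\end{align}
Taking the $\mc$-norm, bounding the first term by $\max\{A_0,B_0\}\,(\|h_1-h_2\|+|v_1-v_2|)$ and the second by $(A_0K_1+B_0K_2)\|x_1-x_2\|$ exactly as in Theorem~\ref{Theo1}, then transposing the contraction term to the left-hand side and dividing by $1-(A_0K_1+B_0K_2)>0$, produces precisely the claimed inequality.

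The argument is essentially routine; the only points requiring care are (i) that the Lipschitz estimate is measured with respect to the supremum norm that $\mc^n$ inherits from $\mc$ — the norm with respect to which both $A_0$ and $\|\mathcal{L}^{-1}\|$ were defined — rather than a stronger $C^n$-type norm, and (ii) that $\Psi$ is genuinely defined on all of $\mc$ with the stated Lipschitz behaviour, which was already verified in the proof of Theorem~\ref{Theo1}. Granted these, the division step is legitimate precisely because the hypothesis $A_0K_1+B_0K_2<1$ keeps the denominator positive, so no further obstacle arises.
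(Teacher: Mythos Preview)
Your proposal is correct and matches the paper's approach exactly: the paper simply states that the result follows directly from corollary 2.3.2 in \cite{dra}, which is precisely what you invoke first. Your additional self-contained argument is also correct and merely unpacks that citation, so there is nothing to add.
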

 
 \begin{corollary}
 Under the conditions of theorem \ref{Theo1}, the map $(\mathcal{L}-\Psi)^{-1}: \mc \times \mathbb{R}^n \to \mc^n$ is compact. This follows from the fact that we can write 
 \begin{align}
 (\mathcal{L}-\Psi )^{-1}=\mathcal{L}^{-1} \Big(\Psi \circ (\mathcal{L}-\Psi )^{-1}+I \Big). \nonumber
 \end{align}
 Therefore it is clear from this representation that $(\mathcal{L}-\Psi)^{-1}$ is compact as the composition of a compact operator with a continuous one.
 \end{corollary}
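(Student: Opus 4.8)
The plan is to take the algebraic identity quoted in the statement at face value and thereby reduce compactness of $(\mathcal{L}-\Psi)^{-1}$ to the compactness of $\mathcal{L}^{-1}$ already proved in the Lemma. First I would justify the identity itself. By Theorem \ref{Theo1}, under the standing hypothesis $A_0K_1+B_0K_2<1$ the operator $\mathcal{L}-\Psi$ is a bijection of $\mc^n$ onto $\mc\times\mathbb{R}^n$, so for $(h,v)\in\mc\times\mathbb{R}^n$ we may put $w=(\mathcal{L}-\Psi)^{-1}(h,v)^T$, which gives $\mathcal{L}w=\Psi(w)+(h,v)^T$; applying the bounded linear operator $\mathcal{L}^{-1}$ and then substituting $w=(\mathcal{L}-\Psi)^{-1}(h,v)^T$ on the right yields
\[
(\mathcal{L}-\Psi)^{-1}=\mathcal{L}^{-1}\circ\bigl(\Psi\circ(\mathcal{L}-\Psi)^{-1}+I\bigr)
\]
as operators on $\mc\times\mathbb{R}^n$. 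Since $\Psi\colon\mc\to\mc\times\mathbb{R}^n$ and $(\mathcal{L}-\Psi)^{-1}\colon\mc\times\mathbb{R}^n\to\mc^n\subseteq\mc$, the operator $N=\Psi\circ(\mathcal{L}-\Psi)^{-1}+I$ maps $\mc\times\mathbb{R}^n$ into itself, so the composition with $\mathcal{L}^{-1}$ is well defined.

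Next I would record that $N$ is Lipschitz continuous: $\psi$ is Lipschitz with constant $K_1$ and $\eta$ with constant $K_2$, hence $\Psi$ is Lipschitz continuous; the preceding corollary shows $(\mathcal{L}-\Psi)^{-1}$ is Lipschitz with constant $K$; and $I$ is an isometry. A finite sum and composition of Lipschitz maps is Lipschitz, so $N$ is continuous and, being Lipschitz, carries bounded subsets of $\mc\times\mathbb{R}^n$ to bounded subsets of $\mc\times\mathbb{R}^n$.

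Finally I would invoke the Lemma: $\mathcal{L}^{-1}\colon\mc\times\mathbb{R}^n\to\mc$ is compact. Given a bounded set $S\subseteq\mc\times\mathbb{R}^n$, the set $N(S)$ is bounded, hence $\mathcal{L}^{-1}\bigl(N(S)\bigr)=(\mathcal{L}-\Psi)^{-1}(S)$ is relatively compact in $\mc$; and because $\mathcal{L}^{-1}$ takes values in $\mc^n$ (as shown in the proof of Theorem \ref{Theo1}), $(\mathcal{L}-\Psi)^{-1}$ maps bounded sets into relatively compact subsets of $\mc^n$. This is the asserted compactness.

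There is no serious obstacle here; the only points that need care are (i) getting the order of the composition right — it is \emph{compact} $\circ$ \emph{continuous}, so that boundedness of $N(S)$ is exactly the input that feeds Arzel\'a--Ascoli through $\mathcal{L}^{-1}$ — and (ii) observing that Lipschitz continuity of $N$, while not making $N$ a bounded operator, does guarantee that $N$ sends bounded sets to bounded sets, which is all the composition argument requires.
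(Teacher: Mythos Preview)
Your proposal is correct and follows exactly the approach indicated in the paper; indeed, the paper gives no separate proof beyond the two sentences embedded in the corollary statement itself, and you have simply supplied the details (verifying the identity, noting that $N=\Psi\circ(\mathcal{L}-\Psi)^{-1}+I$ is Lipschitz and hence bounded-to-bounded, and invoking the Lemma on $\mathcal{L}^{-1}$). One minor caution: the Lemma establishes compactness of $\mathcal{L}^{-1}$ as a map into $\mc$ with the sup norm, so the relative compactness you obtain is strictly in $\mc$; the paper's phrasing ``into $\mc^n$'' should be read in that sense, which is also all that is used downstream in Theorem~\ref{theo2}.
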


\indent We now proceed to establish conditions for the solvability of the boundary value problem
\begin{align}
a_n(t)x^{(n)}(t)+a_{n-1}(t)x^{(n-1)}(t)+\dots+a_0(t) x(t)+\psi(x(t))=G(x)(t) \nonumber
\end{align}
subject to the boundary conditions
\begin{align}
\sum_{j=1}^n\int_0^1 x^{(j-1)}(t)d\omega_{ij}(t)+\eta_i(x) &=\phi_i(x)  \nonumber
\end{align}
 for $1 \leq i \leq n$. \\
 \indent We define $\phi:\mc \to \mathbb{R}^n$ by $\phi=
\begin{bmatrix}

\phi_1 \nonumber \\
\phi_2 \nonumber \\
\dots \nonumber \\
\phi_n \nonumber 
\end{bmatrix}$
 and $\mathcal{G}: \mc \to \mc \times \mathbb{R}^n$ by
$\mathcal{G}=\Bigg[
\begin{array}{c}
G \\
\phi \\
\end{array}
\Bigg]$.
\\

 In doing so, we are now ready to state sufficient conditions under which we can guarantee the existence of at least one solution to the nonlinear boundary value problem $(3)-(4)$. \\
 
 \indent Before stating theorem 2, define $M_0$ as the norm of the unique solution  to the boundary value problem
 \begin{align}
a_n(t)x^{(n)}(t)+a_{n-1}(t)x^{(n-1)}(t)+\dots+a_0(t) x(t)+\psi(x(t))=0 \nonumber
\end{align}
subject to the boundary conditions
\begin{align}
\sum_{j=1}^n\int_0^1 x^{(j-1)}(t)d\omega_{ij}(t)+\eta_i(x) &=0 \nonumber
\end{align}
for $1 \leq i \leq n$.

\begin{theorem} \label{theo2}
Suppose the map $\psi: \mathbb{R} \to \mathbb{R}$ is Lipschitz with constant $K_1$ and $\eta: \mc \to \mathbb{R}^n$ is Lipschitz with constant $K_2$. If 
\begin{align}
A_0K_1+B_0K_2<1 \nonumber
\end{align}
and there exists a constant $M$ such that for $\|x\| \leq M$, $\| \mathcal{G}(x)\| \leq K^{-1}(M-M_0)$. Then there exists a solution to the boundary value problem
\begin{align}
a_n(t)x^{(n)}(t)+a_{n-1}(t)x^{(n-1)}(t)+\dots+a_0(t) x(t)+\psi(x(t))=G(x)(t) \nonumber 
\end{align}
subject to the boundary conditions
\begin{align}
\sum_{j=1}^n\int_0^1 x^{(j-1)}(t)d\omega_{ij}(t)+\eta_i(x) &=\phi_i(x)  \nonumber
\end{align}
for $1 \leq i \leq n$.
\end{theorem}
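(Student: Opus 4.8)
The plan is to recast the boundary value problem $(3)$--$(4)$ as a fixed point equation on a closed ball of $\mc$ and then invoke Schauder's fixed point theorem, using the structural results already established for $\mathcal{L}-\Psi$.

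First I would observe that a function $x\in\mc^n$ solves $(3)$--$(4)$ if and only if $(\mathcal{L}-\Psi)(x)=\mathcal{G}(x)$: reading this identity componentwise, the first component is $Lx+\psi\circ x=G(x)$, which is the differential equation, and the second is $Bx+\eta(x)=\phi(x)$, which is the system of boundary conditions. Since the hypothesis $A_0K_1+B_0K_2<1$ places us in the setting of Theorem~\ref{Theo1}, the operator $\mathcal{L}-\Psi:\mc^n\to\mc\times\mathbb{R}^n$ is a bijection, so $(3)$--$(4)$ is equivalent to the fixed point equation $x=(\mathcal{L}-\Psi)^{-1}\mathcal{G}(x)$. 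Set $T:=(\mathcal{L}-\Psi)^{-1}\circ\mathcal{G}$, which maps $\mc$ into $\mc^n\subseteq\mc$, and note that $M_0$ is exactly $\|(\mathcal{L}-\Psi)^{-1}(0,0)^T\|$, since the homogeneous problem defining $M_0$ is $(\mathcal{L}-\Psi)(x)=(0,0)^T$.

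Next I would verify the two hypotheses of Schauder's theorem on $\overline{B}_M=\{x\in\mc:\|x\|\le M\}$, which is nonempty, closed, bounded and convex (and $M\ge M_0$ is forced by the growth assumption). For the self-map property, given $x\in\overline{B}_M$ I would use the Lipschitz continuity of $(\mathcal{L}-\Psi)^{-1}$ with constant $K$, established in the first corollary above, anchoring the triangle inequality at $(0,0)^T$ --- this is the point where it matters that $(\mathcal{L}-\Psi)^{-1}$ is \emph{not} linear:
\begin{align*}
\|T(x)\| &\le \big\|(\mathcal{L}-\Psi)^{-1}\mathcal{G}(x)-(\mathcal{L}-\Psi)^{-1}(0,0)^T\big\|+\big\|(\mathcal{L}-\Psi)^{-1}(0,0)^T\big\| \\
&\le K\,\|\mathcal{G}(x)\|+M_0 \le K\cdot K^{-1}(M-M_0)+M_0 = M ,
\end{align*}
the last inequality being precisely the growth hypothesis $\|\mathcal{G}(x)\|\le K^{-1}(M-M_0)$. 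Hence $T(\overline{B}_M)\subseteq\overline{B}_M$. For compactness, $\mathcal{G}$ is continuous (its first component $G$ is continuous by assumption, and its remaining components are the maps $\phi_i$), while $(\mathcal{L}-\Psi)^{-1}$ is compact by the second corollary above; therefore $T$, being a compact operator composed with a continuous one, is continuous with relatively compact image.

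Finally, Schauder's fixed point theorem applied to $T$ on $\overline{B}_M$ produces a fixed point $x_0\in\overline{B}_M$, so that $(\mathcal{L}-\Psi)(x_0)=\mathcal{G}(x_0)$; since $(\mathcal{L}-\Psi)^{-1}$ takes values in $\mc^n$ we get $x_0\in\mc^n$, and reading off the two components of this identity shows that $x_0$ solves $(3)$--$(4)$. I expect the only genuinely non-mechanical step to be the self-map estimate above: the constant $M_0$ and the form of the growth condition are designed exactly so that the Lipschitz bound on $(\mathcal{L}-\Psi)^{-1}$ maps $\overline{B}_M$ into itself, and the argument fails if one forgets to subtract the reference value $(\mathcal{L}-\Psi)^{-1}(0,0)^T$. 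The heavier analytic work --- compactness of $\mathcal{L}^{-1}$ via Arzel\'{a}--Ascoli, and consequently of $(\mathcal{L}-\Psi)^{-1}$ --- has already been carried out in the preceding lemma and corollary.
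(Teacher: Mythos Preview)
Your proposal is correct and follows essentially the same route as the paper: reformulate the problem as a fixed point equation for $T=(\mathcal{L}-\Psi)^{-1}\mathcal{G}$, use compactness of $(\mathcal{L}-\Psi)^{-1}$ from the second corollary together with continuity of $\mathcal{G}$ to make $T$ compact, and use the Lipschitz bound from the first corollary anchored at $(\mathcal{L}-\Psi)^{-1}(0,0)^T$ to show $T$ maps $\overline{B}_M$ into itself, then apply Schauder. In fact your self-map estimate is written more carefully than the paper's, which abbreviates the reference element $(\mathcal{L}-\Psi)^{-1}(0,0)^T$ by its norm $M_0$ inside the first triangle-inequality term.
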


\begin{proof}
Note that the map $(\mathcal{L}-\Psi)^{-1} \mathcal{G}:\mc \to \mc$ is compact as the composition of a compact operator with a continuous one. Define $B=\{ z \in \mc: \|z\| \leq M\}$. Let $x \in B$. Then
\begin{align}
\|(\mathcal{L}-\Psi)^{-1} \mathcal{G}(x) \| &\leq \|(\mathcal{L}-\Psi)^{-1} \mathcal{G}(x) -M_0 \|+M_0 \nonumber \\
&\leq K\|\mathcal{G}(x)\| +M_0\nonumber \\
&\leq K(K^{-1}(M-M_0))+M_0 \nonumber \\
&= M. \nonumber
\end{align}
Since $(\mathcal{L}-\Psi)^{-1} \mathcal{G}(B) \subseteq B$ and $B$ is clearly closed, bounded, and convex we have that $(\mathcal{L}-\Psi)^{-1} \mathcal{G}$ has at least one fixed point in $\mc$ by Schauder's fixed point theorem. That is, there exists at least one $x_0 \in \mc$ such that $(\mathcal{L}-\Psi)^{-1} \mathcal{G}(x_0)=x_0$. Since $(\mathcal{L}-\Psi)^{-1}$ maps into $\mc^n$, we have that $x_0$ must be an element of $\mc^n$. This is equivalent to there existing at least one $x_0 \in \mc^n$ such that $\mathcal{L}(x_0)-\Psi(x_0)=\mathcal{G}(x_0)$.
\qed \medskip
\end{proof}

The following corollary is immediate:
\begin{corollary}
Suppose the map $\psi: \mathbb{R} \to \mathbb{R}$ is Lipschitz with constant $K_1$ and $\eta: \mc \to \mathbb{R}^n$ is Lipschitz with constant $K_2$.  If 
\begin{align}
A_0K_1+B_0K_2<1 \nonumber
\end{align} and
\begin{align}
\lim_{\|x\| \to \infty} \frac{\|\mathcal{G}(x)\|}{\|x\|}=0 \nonumber
\end{align} 
then the boundary value problem $\eqref{de}-\eqref{bc}$ has a solution.
\end{corollary}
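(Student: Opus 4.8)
The plan is to deduce this corollary from Theorem \ref{theo2} by verifying that the growth hypothesis $\lim_{\|x\| \to \infty} \|\mathcal{G}(x)\|/\|x\| = 0$ forces the existence of a radius $M$ satisfying the inequality $\|\mathcal{G}(x)\| \leq K^{-1}(M - M_0)$ for all $\|x\| \leq M$. The remaining hypotheses of Theorem \ref{theo2} --- that $\psi$ is Lipschitz with constant $K_1$, that $\eta$ is Lipschitz with constant $K_2$, and that $A_0 K_1 + B_0 K_2 < 1$ --- are assumed verbatim in the corollary, so the only work is to produce the constant $M$.

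First I would record that the sublinear growth condition is equivalent to: for every $\epsilon > 0$ there exists $R_\epsilon > 0$ such that $\|\mathcal{G}(x)\| \leq \epsilon \|x\|$ whenever $\|x\| \geq R_\epsilon$. I would apply this with the specific choice $\epsilon = \tfrac{1}{2} K^{-1}$ (any $\epsilon < K^{-1}$ works; a concrete value keeps the estimate clean), obtaining a radius $R = R_\epsilon$ beyond which $\|\mathcal{G}(x)\| \leq \tfrac{1}{2} K^{-1} \|x\|$. Next, on the complementary bounded region, I would use that $\mathcal{G}$ is continuous on the closed ball $\{\|x\| \leq R\}$; however, continuity alone does not give boundedness on a ball in an infinite-dimensional space, so instead I would invoke the fact --- established in the second corollary after Theorem \ref{Theo1} --- that $(\mathcal{L}-\Psi)^{-1}\mathcal{G}$ is compact, hence maps the bounded ball $\{\|x\|\le R\}$ to a relatively compact, therefore bounded, set. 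Actually it is cleaner to argue directly: since we ultimately only need boundedness of $\|\mathcal{G}(x)\|$ for $\|x\|\le R$, and Theorem \ref{theo2}'s hypothesis is phrased in terms of $\|\mathcal{G}(x)\|$, I would note that $\mathcal G = (G,\phi)^T$ with $G$ and the $\phi_i$ continuous, and combine continuity with the compactness already in hand to get a bound $\|\mathcal G(x)\|\le C_R$ for $\|x\|\le R$.

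Having both pieces, I would choose $M \geq R$ large enough that simultaneously $M - M_0 \geq 2 K C_R$ (so the bounded-region bound $C_R \le \tfrac12 K^{-1}(M-M_0)$ holds) and $M - M_0 \geq \tfrac{1}{2} M$, equivalently $M \geq 2 M_0$ (so the sublinear-region bound $\tfrac12 K^{-1}\|x\| \le \tfrac12 K^{-1} M \le K^{-1}(M-M_0)$ holds). Concretely one can take $M = \max\{\,2M_0,\ R,\ M_0 + 2KC_R\,\}$. Then for any $x$ with $\|x\| \leq M$: if $\|x\| \leq R$ we have $\|\mathcal{G}(x)\| \leq C_R \leq K^{-1}(M-M_0)$; if $R \leq \|x\| \leq M$ we have $\|\mathcal{G}(x)\| \leq \tfrac12 K^{-1}\|x\| \leq \tfrac12 K^{-1} M \leq K^{-1}(M - M_0)$. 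Thus the hypothesis of Theorem \ref{theo2} is met with this $M$, and the conclusion --- existence of a solution to $\eqref{de}$--$\eqref{bc}$ --- follows immediately.

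The main obstacle, and the only genuinely non-cosmetic point, is the boundedness of $\mathcal{G}$ on a bounded ball: in an infinite-dimensional Banach space continuity does not imply boundedness on bounded sets, so this step must lean on the compactness of $(\mathcal{L}-\Psi)^{-1}\mathcal{G}$ (second corollary above), or else on an additional tacit regularity of $G$. Once that is settled, the rest is the elementary two-region splitting of the ball $\{\|x\|\le M\}$ and an arithmetic choice of $M$; everything else is inherited directly from Theorem \ref{theo2}.
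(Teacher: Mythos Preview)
Your reduction to Theorem~\ref{theo2} via a two-region splitting and a choice of $M$ is precisely the intended argument; the paper gives no proof at all, calling the corollary ``immediate.'' You actually go further than the paper by noticing that in an infinite-dimensional space continuity of $\mathcal{G}$ does not by itself yield boundedness on the ball $\{\|x\|\le R\}$, which you need for the constant $C_R$.

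Your proposed fix, however --- invoking compactness of $(\mathcal{L}-\Psi)^{-1}\mathcal{G}$ --- is circular. That compactness is asserted in the proof of Theorem~\ref{theo2} as ``the composition of a compact operator with a continuous one,'' but $T\circ S$ with $T$ compact and $S$ merely continuous is compact only when $S$ sends bounded sets to bounded sets; so the compactness you want to cite already presupposes the very boundedness you are trying to extract. (Even granting it, it would bound $\|(\mathcal{L}-\Psi)^{-1}\mathcal{G}(x)\|$ rather than $\|\mathcal{G}(x)\|$, and you cannot pass back since $\mathcal{L}$ involves derivatives and is unbounded in the sup norm.) The honest resolution is the one you hint at in your final sentence: boundedness of $\mathcal{G}$ on bounded sets is a tacit standing hypothesis throughout the paper --- it is already needed for the compactness claim in Theorem~\ref{theo2} and holds automatically for the concrete choices of $G$ the paper lists (Nemytskii operators $G(x)(t)=g(x(t))$ and integral operators). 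With that hypothesis made explicit, your two-region estimate and choice of $M$ are correct and the corollary follows.
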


In the next section, we consider advantages this framework provides us in cases where we attempt to analyze problems that are seemingly well-suited to using the framework outlined in \cite{ja2} and \cite{jsuar1}.

\section{Comparison to Previous Results}
To view advantages of using this framework as opposed to previous results, consider another set of special cases of the general boundary value problem $\eqref{de}-\eqref{bc}$. That is, problems already in self-adjoint form. This is a necessity if we are to attempt to use the analysis of \cite{ja2} and \cite{jsuar1}. \\

\begin{remark}
\indent Consider differential equations on $[0,1]$ of the form:
\begin{align}
(p(t)x'(t))'+q(t)x(t)+\psi(x(t))=G(x(t)) \label{sl}
\end{align}
subject to the boundary conditions
\begin{align}
\alpha x(0)+\beta x'(0)+ \sum_{j=1}^2 \int_0^1  x^{(j-1)}(t) d \omega_{1j} (t)+\eta_1(x)=0  \nonumber \\
\label{slbc} \\
\gamma x(1)+\delta x'(1)+\sum_{j=1}^2 \int_0^1 x^{(j-1)}(t) d \omega_{2j}(t)+\eta_2(x)=0 \nonumber
\end{align}
where $\psi: \mathbb{R} \to \mathbb{R}$ is Lipschitz, $\alpha^2+\beta^2 \neq 0$, $\gamma^2+\delta^2 \neq 0$, $\eta_1$ and $\eta_2$ are nonlinear Lipschitz functions from $\mc^2$ into $\mathbb{R} $. The function $\omega_{ij}:[0,1] \to \mathbb{R}$ is a function of bounded variation for $i=1,2$ and $j=1,2$. We assume that $p, p'$, and $q$ are continuous, $p(t)>0$ for all $t \in [0,1]$. We assume the map $G$ is a continuous function from $\mc$ into $\mc$ satisfying
\begin{align}
\lim_{\|x\| \to \infty} \frac{\|G(x)\|}{\|x\|}=0 .\nonumber
\end{align}   \\
\indent Using results from \cite{ja2} and \cite{jsuar1}, we would have to treat the linear integral boundary conditions appearing in $\eqref{slbc}$ as part of the nonlinear component of the problem. Let $\hat{K}_2$ be the Lipschitz constant of the map defined by
\begin{align}
x \mapsto \Bigg[
\begin{array}{c}
\sum_{j=1}^2 \int_0^1  x^{(j-1)}(t) d \omega_{1j} (t)+\eta_1(x) \nonumber \\
\sum_{j=1}^2 \int_0^1  x^{(j-1)}(t) d \omega_{2j} (t) +\eta_2(x) \nonumber
\end{array}
\Bigg]
\end{align}
with respect to the norms used in those previous papers. Suppose $\{u_1,u_2\}$ is a basis for the solution space of 
\begin{align}
(p(t)x'(t))'+q(t)x(t)=0 \nonumber
\end{align}
and without loss of generality suppose that 
\begin{align}
\Bigg(\int_0^1 |u_1(t)|^2 dt \Bigg)^{\frac{1}{2}}+\Bigg(\int_0^1 |u_2(t)|^2 dt \Bigg)^{\frac{1}{2}} \leq 1. \nonumber
\end{align}
Further, suppose the $2 \times 2$ matrix 
$\hat{\mathcal{B}}=\Bigg[
\begin{array}{cc}
\alpha u_1(0)+\beta u_1'(0),&\alpha u_2(0)+\beta u_2'(0) \nonumber \\
\gamma u_1(1)+\delta u_1'(1),&\gamma u_2(1)+\delta u_2'(1) \nonumber
\end{array} 
\Bigg]$ is invertible.
Then if
\begin{align}
\Bigg(\sup_{v \in \mathbb{R}^2} |\hat{\mathcal{B}}^{-1} v| \Bigg) \hat{K}_2  \geq 1 \nonumber
\end{align}
it would be impossible to establish the existence of solutions to $\eqref{sl}-\eqref{slbc}$ using any of the results appearing in \cite{ja2} or \cite{jsuar1}. 

When we formulate $\eqref{sl}-\eqref{slbc}$ within the framework presented in section 3, it is clear that if the map
\begin{align}
x \mapsto 
\begin{bmatrix}
(p(t)x'(t))'+q(t)x(t) \nonumber \\
\alpha x(0)+\beta x'(0)+\sum_{j=1}^2 \int_0^1  x^{(j-1)}(t) d \omega_{1j} (t) \nonumber \\
\gamma x(1)+\delta x'(1)+\sum_{j=1}^2 \int_0^1  x^{(j-1)}(t) d \omega_{2j} (t)  \nonumber
\end{bmatrix}
\end{align}
is a bijection from its domain onto $\mc \times \mathbb{R}^2$ then the boundary value problem would have a solution provided the Lipschitz constants for $\eta_1, \eta_2$, and $\psi$ are sufficiently small.
The magnitude of the linear integral boundary conditions is completely irrelevant.
\end{remark}

We would now like to point out that the map $G$ that we have just described can be generated in a variety of ways. For example $G$ could be of the form
\begin{align}
G(x)(t)=g(x(t)) \nonumber
\end{align}
where $g: \mathbb{R} \to \mathbb{R}$ is continuous or of the type
\begin{align}
G(x)(t)= \int_0^1 k(t,x(s)) ds \nonumber 
\end{align}
where $k: \mathbb{R}^2 \to \mathbb{R}$. \\

In addition to the advantages that we have just discussed, we would like to point out that if the nonlinearities $\eta_i$ appearing above are of the form
\begin{align}
\eta_i=\sum_{j=1}^N f_{i,j} (x(t_j)) \nonumber
\end{align}
then in order to use results appearing in \cite{ja2} or \cite{jsuar1} it must be assumed that the operator $G$ is compact. This restriction is no longer present when using the results that we have just presented.


\end{document}